\newcommand{\pa}{{\partial }}
\newcommand{\cD}{{\cal D}}
\newcommand{\cN}{{\cal N}}
\newcommand{\cR}{{\cal R}}
\newcommand{\cU}{{\cal U}}
\newcommand{\cV}{{\cal V}}
\newcommand{\R}{\ensuremath{\mathbb{R}}}
\newtheorem{Theorem}{Theorem}
\def\eref#1{(\ref{#1}%
%, {\tt {#1}}%
)}
\def\RSref#1{\ref{#1}%
% ({\tt {#1}}) %               % to be removed later
}
\def\RSlabel#1{\label{#1}%
% ({\tt {#1}})%                 % to be removed later
}
\def\RScite#1{\cite{#1}%
%, {\tt {#1}}%                 % to be removed later 
}
\def\biglf{\par\bigskip\noindent} % to be changed for journal style
\def\fa{\hbox{ for all }}
\def\bql#1{\begin{equation}\label{#1}}
\def\eq{\end{equation}}
\begin{document}
\begin{center}
  {\bf
    A Nonlinear Discretization Theory \\
    for Meshfree Collocation Methods\\
applied to  Quasilinear Elliptic Equations\\}~\\~\\

Klaus B\"ohmer\footnote{Fachbereich Mathematik und Informatik,
Universit\"at Marburg,
Arbeitsgruppe Numerik,
Hans Meerwein Stra\ss{}e, Lahnberge,
D-35032 Marburg,
Germany},
  Robert Schaback \footnote{Institut f\"ur Numerische und Angewandte Mathematik,
Universit\"at G\"ottingen,
Lotzestra\ss{}e 16-18,
D-37073 G\"ottingen,
Germany}
\end{center}
%\magenta{Stand: \today}
\begin{abstract}\par\noindent  
  We generalize 
  our earlier  results  concerning meshfree collocation  methods
  for semilinear elliptic second order  problems to
  the quasilinear case. 
  The stability question, however, is treated differently, namely by
  extending a paper on uniformly stable discretizations
  of well-posd linear problems to the nonlinear case.
  These two ingredients allow a proof that all well-posed quasilinear
  elliptic second-order problems can be discretized in a
  uniformly stable way by using sufficient oversampling, and then the
  error of the numerical solution behaves like the 
  error obtainable by direct approximation of the true solution
  by functions from the chosen trial space,
  up to a factor induced by being forced to use
    a Hölder-type theory for the nonlinear PDE.
  We apply our general technique
    to prove convergence of meshfree methods
    for quasilinear elliptic equations with Dirichlet and non-Dirichlet 
  boundary conditions. This is achieved   for bifurcation and center manifolds of elliptic partial differential equations  and their numerical methods as well.
 \end{abstract}

%****************************************************************
\section{Overview}\RSlabel{SecOver}

Similarly to our previous papers, we  examine the convergence
behavior of suitable collocation-based meshfree methods.
In \RScite{BoScha12,boehmer-schaback:2017-1} we had studied
exactly one of the simplest and then one of the
most complicated nonlinear elliptic boundary value problems of order two,
the fully nonlinear  Monge--Amp\`{e}re equation.
The positive numerical experience with these cases  calls for a
generalization to a whole class of quasi-linear  problems.

For application aspects of these problems,
there are many numerical results with meshfree methods,
however, without discussing the convergence 
% \textbf{Robert, can you please add a few things there}
\RScite{%schaback-wendland:1999-1, wendland:1999-2,
  fasshauer:2002-1,zhang-et-al:2008-1,
  tsai:2012-1,davydov-saaed:2013-1,%farrell-wendland:2013-1,
  li-liu:2017-1,
  jankowska-et-al:2018-1}.
\biglf
The book \RScite{Boehmer05} contains
a lot of new, but also summarizes many known results for the non-meshfree case.
For example, for 
Finite Element Methods Zeidler \RScite{Zeidler90a} and
Skrypnik \RScite{Skrypnik86}, for difference methods Schumann and Zeidler
\RScite{SchuZe79}.
Most interesting, however, are the contributions to
Discontinuous Galerkin Methods.
They start with Rivi\'ere and Wihler \RScite{RiWh00} and continue
e.g. with S\"uli and his colleagues
Houston, Robson, and  Wihler \RScite{HoSu05,HoSuWi08,HoRoSu05}.
\biglf
This paper starts with the essential
prerequisites of the nonlinear discretization theory
from \RScite{BoScha12}, namely
\begin{enumerate}
   \item well-posedness of the PDE problem,
   \item approximation in trial spaces,
   \item testing by equations and solving by optimization, and finally 
   \item Stability.
\end{enumerate} 
However, it takes a different path in
Section \RSref{SecStab} concerning stability.
There, the general nonlinear discretization theory of \RScite{BoScha12}
  is combined with a result of \RScite{schaback:2015-4} for linear problems.
The latter
allows to construct uniformly
stable monotone, refinable, and dense discretizations of well-posed linear
problems
that are uniformly stable, the numerical stability being only dependent on
the stability of the original linear PDE problem. The region of the local validity
of this discretization does not vary with the refinement of the discretization.
This strengthens a result in \RScite{BoScha12},
and is applicable to all
PDE problems that satisfy the well-posedness assumptions of \RScite{BoScha12}.
In section \RSref{SecExa}, we prove that these assumptions are satisfied for
general elliptic quasilinear equations of order two. 
Extensions will allow other nonlinear well-posed PDE problems,
provided that the specific well-posedness assumptions are satisfied. 
%RS An additional section
Finally, Section \RSref{SecOBC} seems to be the first dealing 
  with the case of non-Dirichlet boundary conditions for meshless methods
  solving quasilinear elliptic problems.
%****************************************************************
%\section{A Collocation  Discretization Theory}\RSlabel{SecDT}
\section{Well-posed nonlinear problems}%
\RSlabel{ss:WPP}
We consider boundary value problems
on bounded Lipschitz domains $\Omega$  in $\R^d$ and formulate them strongly
as 
$$
\begin{array}{rclcl}
  Gu&=& f_1 & \hbox{ on } &\Omega \\
  Bu&=& f_2 & \hbox{ on } &\partial \Omega 
\end{array}
$$
with differential and boundary operators $G$ and $B$, respectively.
These are mappings defined as
$$
\begin{array}{rclclcl}
G&:&\cD(G) &\subseteq& \cU&\to& \cV_1\\
B&:&\cD(B) &\subseteq& \cU&\to& \cV_2\\
\end{array}
$$
on a Banach space $\cU$ of functions on $\overline{\Omega}$,
and map that space into Banach spaces
$\cV_1$ and $\cV_2$ on the domain and the boundary, respectively.
We combine the two maps and simplify the problem by
\bql{eqFGB}
\begin{array}{rcl}
  F&=& (G,B)\\
  F\;:\;\cD(F) &\subseteq& \cU\to \cV:=\cV_1\times\cV_2,\\
  Fu&=& f  := (f_1,f_2)
\end{array}
\eq
for notational convenience.
\biglf
The space $\cU$ should contain a locally unique true
solution denoted by $u^*$ that we want to approximate numerically.
\biglf
Around this local solution, we require some form of well-posedness
of the problem. One way is to let the linearization $F'$ of $F$ near $u^*$ 
be boundedly invertible as a map
$F'\;:\;\cU\to\cV$.
The other, less popular one is to ask for an inequality of the form
\bql{eq1a}
c_F^{-1} \|u-v\|_{\cU}
\leq 
\|Fu-Fv\|_{\cV}\leq C_F
\|u-v\|_{\cU}
\eq
for all $u,\,v$ in a neighborhood of $u^*$.
Fortunately, \eref{eq1a} follows from the usual well-posedness condition:
\begin{Theorem}\RSlabel{Theold3}\RScite{BoScha12}
  Let $F$ be Fr\'echet-differentiable in each point
  of a neighborhood $\cN(u^*)$ of $u^*$ and let the Fr\'echet derivatives
  $F'(u)$ at $u$
  be bounded and Lipschitz continuous, i.e.
  $$
  \|F' (v) - F'( u )\|_\cV \leq C \|u-v\|_\cU \fa u,v\in \cN(u^*).
  $$
  Finally, let $F'(u^*)$  have a bounded inverse.
  Then \eref{eq1a} holds in a neighborhood of $u^*$,
  and all Fr\'echet derivatives are uniformly bounded
  and have uniformly bounded inverses there.
\end{Theorem}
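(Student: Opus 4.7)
The plan is to derive both directions of \eref{eq1a}, together with the uniform bounds on $F'(u)$ and $F'(u)^{-1}$, from a single integral representation of $Fu-Fv$ combined with a Neumann--series perturbation argument. First I would use the Lipschitz continuity of $F'$ to control how far $F'(u)$ can drift from $F'(u^*)$: if $\|u-u^*\|_\cU \le r$, then $\|F'(u)-F'(u^*)\|\le Cr$. Choosing $r$ small enough so that $Cr\,\|F'(u^*)^{-1}\| < 1$, the standard Neumann-series argument gives invertibility of $F'(u)$ together with the uniform bound
$$
\|F'(u)^{-1}\| \;\le\; \frac{\|F'(u^*)^{-1}\|}{1-Cr\,\|F'(u^*)^{-1}\|}
\fa u \in \cN(u^*) \cap B(u^*,r).
$$
The corresponding upper bound on $\|F'(u)\|$ is immediate from the Lipschitz property and the boundedness of $F'(u^*)$.

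Next, for any $u,v$ in this ball, I would invoke the fundamental theorem of calculus for Fr\'echet derivatives,
$$
Fu-Fv \;=\; \int_0^1 F'\bigl(v+t(u-v)\bigr)(u-v)\,dt,
$$
which at once gives the upper bound in \eref{eq1a} with $C_F$ equal to the uniform bound on $\|F'\|$ above. For the lower bound I would split the same integrand around the reference point,
$$
Fu-Fv \;=\; F'(u^*)(u-v) + \int_0^1 \bigl[F'(v+t(u-v)) - F'(u^*)\bigr](u-v)\,dt,
$$
whose second term is bounded in $\cV$-norm by $Cr\,\|u-v\|_\cU$ by Lipschitz continuity. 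Applying $F'(u^*)^{-1}$ to both sides and rearranging yields
$$
\|u-v\|_\cU \;\le\; \|F'(u^*)^{-1}\| \bigl( \|Fu-Fv\|_\cV + Cr\,\|u-v\|_\cU \bigr),
$$
and after shrinking $r$ further (if needed) so that $Cr\,\|F'(u^*)^{-1}\| < 1$, the $\|u-v\|_\cU$ term can be absorbed into the left-hand side, producing the lower bound in \eref{eq1a} with an explicit $c_F$ depending only on $\|F'(u^*)^{-1}\|$, $C$, and $r$.

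The main obstacle is essentially bookkeeping: one must choose a single radius $r$ that simultaneously (i) makes the Neumann series for $F'(u)^{-1}$ convergent and (ii) renders the Lipschitz error in the integral representation small enough to be absorbed by the $F'(u^*)^{-1}$ estimate. Both conditions take the same form $Cr\,\|F'(u^*)^{-1}\| < 1$, so they are compatible, and no deeper analytic tool is required beyond the fundamental theorem of calculus for Fr\'echet derivatives and the geometric series in operator norm. The uniform upper and lower bounds on the whole family $\{F'(u)\}_{u\in\cN(u^*)}$ are by-products of exactly the same estimates.
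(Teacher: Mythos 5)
Your argument is correct and is essentially the standard proof of this result, which the present paper does not reprove but imports from \RScite{BoScha12}: a Neumann-series (Banach perturbation) argument yields the uniform bounds on $F'(u)$ and $F'(u)^{-1}$, and the integral mean value theorem, split around the reference derivative $F'(u^*)$, yields both sides of \eref{eq1a} after absorbing the $Cr\,\|F'(u^*)^{-1}\|\,\|u-v\|_\cU$ term. The only point worth making explicit is that the neighborhood must be taken convex (a ball $B(u^*,r)\subseteq\cN(u^*)$) so that the segment $v+t(u-v)$ stays where the Lipschitz bound applies, which your restriction to $B(u^*,r)$ implicitly ensures.
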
 
%%%%%%%%%%%%%%%%%%%%%%%%%%%%%%%%%%%%%%%%%%%%
%%%%%%%%%%%%%%%%%%%%%%%%%%%%%%%%%%%%%%
\section{Approximation in Trial Spaces}\RSlabel{ss:AiTS}
Our meshless numerical approximations are taken from a scale
$\{U_r\}_{r>0}$ 
of linear finite-dimensional nested {\em trial} spaces
$U_r \subset \cU$ with $U_{r'} \subset U_r$  for $r ' < r$.
Our substitute for {\em consistency} is the
assumption that the true solution $u^*$ can
be approximated well by elements $u_r$ of the trial spaces
$U_r$ in the sense
\bql{eqepsr}
\inf_{u_r\in U_r} \| u^* - u_r\|_\cU \leq  \epsilon(r, u^* )
\eq
for all $r > 0$, with small $\epsilon(r, u^* )$ tending to zero for
$r\to 0$. Since we have the
norm of $\cU$ on the left-hand side,
and since we want a good convergence rate of
the approximations, we shall usually have to assume that the
true solution $u^*$
and the trial spaces $U_r$ lie in a {\em regularity subspace} $\cU_R$ of $\cU$
that determines the
convergence rate.
\biglf
Since we did not choose $\epsilon(r, u^* )$
minimally in \eref{eqepsr},
we can assume that
there are elements $u_r^* \in U_r$ with
\bql{eqapperr}
\inf_{u_r\in U_r} \| u^* - u_r\|_\cU \leq  \|u^* - u_r^*\|_\cU\leq \epsilon(r, u^* )
\eq
that realize the optimal convergence rate of approximation. Our goal is that
the numerical solution $\tilde u_r\in U_r$ of the PDE problem,
if we use a proper numerical algorithm for PDE solving
  based on the trial space $U_r$,
converges to the true solution $u^*$
as fast as the approximation $u_r^*$ in the trial space converges.
However, as convergence is usually an interplay of consistency and
  stability,
there may be factors arising from instabilities that deteriorate that
convergence rate.
%%%%%%%%%%%%%%%%%%%%%%%%%%%%%%%%%%%%%%%%%%%%%%%%
%%%%%%%%%%%%%%%%%%%%%%%%%%%%%%%%%%%%%%%%%%%%%%%
%%%%%%%%%%%%%%%%%%%%%%%%%%%%%%%%%%%%%%%%%%%%%%%%
%%%%%%%%%%%%%%%%%%%%%%%%%%%%%%%%%%%%%%%%%%%%%%%
\section{Testing and Solving}\RSlabel{SecTestSol}
The trial space discretizes the {\em domain} of $F$, but {\em testing}
discretizes the {\em range}.
This is done by a scale
$\{T_s\}_{s>0}$ of linear {\em test maps} that takes functions $f=(f_1,f_2)$
in $\cV=\cV_1\times\cV_2$ into {\em data values}
$T_s(f)\in V_s$ in some finite-dimensional space $V_s$, e.g.
$$
T_s(f)=(f_1(x_1),\ldots,f_1(x_M),f_2(y_1),\ldots,f_2(y_N))\in V_s:=\R^{M+N}
$$
with $M, \;N$, and the {\em collocation points} $x_i\in \Omega,\;y_j\in\partial\Omega$
implicitly depending on $s$.
\biglf
The {\em discretized} problem replacing $Fu=f$ now consists in solving
\bql{eqGenDisPro}
T_s(Fu_r)\approx T_s(f) 
\eq
for some trial function $u_r\in U_r$. From Section \RSref{ss:AiTS}
we know that there are good approximations $u_r^*\in U_r$ to $u^*$, and
therefore we are satisfied with finding a numerical solution
$\tilde u_r\in U_r$ with
$$
\|T_s(F\tilde u_r)-T_s(f)\|_{V_s}\leq
2\|T_s(Fu_r^*)-T_s(f)\|_{V_s}.
$$
This can, for instance, be accomplished by an approximate solution of the
finite-dimensional nonlinear optimization
problem 
$$
\min_{u_r\in U_r}\|T_s(Fu_r)-T_s(f)\|_{V_s}
$$
under a suitable parametrization of the trial space. 
Note that we do not solve the linearized problem, in contrast to many
standard algorithms for nonlinear problems.
%%%%%%%%%%%%%%%%%%%%%%%%%%%%%%%%%%%%%%%%%%%%%%%%
%%%%%%%%%%%%%%%%%%%%%%%%%%%%%%%%%%%%%%%%%%%%%%%
%%%%%%%%%%%%%%%%%%%%%%%%%%%%%%%%%%%%%%%%%%%%%%%%
%%%%%%%%%%%%%%%%%%%%%%%%%%%%%%%%%%%%%%%%%%%%%%%
%%%%%%%%%%%%%%%%%%%%%%%%%%%%%%%%%%%%%%%%%%%%%%%
\section{Stability}\RSlabel{SecStab}
Clearly, solving \eref{eqGenDisPro}  in the linear case
will run into problems if $\dim U_r>\dim V_s$, and in general it will
stabilize the problem when we {\em oversample}, i.e. take
$\dim U_r$ smaller than $\dim V_s$. We can mimic the logic
of well-posedness of Section \RSref{ss:WPP} by asking for an inequality
\bql{eq1b}
\|u_r-v_r\|_{\cU}
\leq 
C_{SF}(r,s)\|T_sFu_r-T_sFv_r\|_{V_s}
\eq
for all $u_r,\,v_r$ in $U_r$, but true stability would mean that
the constant $C_{SF}(r,s)$ has a fixed upper bound. This can be achieved
\RScite{schaback:2015-4} for well-posed problems by
letting the test strategy {\em oversample}, i.e. by letting the test maps
$T_s$ depend on the $U_r$ such that  $C_{SF}(r,s(r))$ is uniformly bounded.
\biglf
We want to play this back to the linearization, using
\RScite{BoScha12} and \RScite{schaback:2015-4}.
The latter paper proves that for all well-posed linear problems
one can find well-designed
{\em monotonic refinable dense} (MRD) discretizations that
are uniformly stable. In the notation of this paper,
the basic requirement for MRD discretizations can be shortly rephrased as  
\bql{eqvVDens}
\|v\|_\cV:=\displaystyle{\sup_{s>0} \|T_sv\|_{V_s}   }\fa v\in \cV, 
\eq
defining a norm on $\cV$. This holds if for $s\to 0$
the discrete norms
approximate the continuous norm from below
({\em monotonicity}) by getting finer and
finer ({\em refinability}) and {\em dense} in the limit. Note that this property
is independent of PDEs and trial spaces. It just expresses how discrete norms
approximate continuous norms. The simplest case arises for the sup norm on $\cV$
and pointweise collocation, i.e. strong discretization of the data by pointwise
evaluation
on finite sets getting dense in the domain. But it holds also for weak
discretizations and the $L_2$ norm.
\biglf
Unfortunately, a problem for the applications in Sections
  \RSref{SecExa} and \RSref{SecOBC} to quasilinear systems
  arises with \eref{eqvVDens}, because the PDE theory 
  there needs $\cV$ to carry a Hölder norm. But in
  \RScite{schaback:2015-4} and in the
  numerical paper \RScite{boehmer-schaback:2017-1}, the range space
  should be  $\cV=C(\overline\Omega)\times C(\partial\Omega)$,
  making \eref{eqvVDens} obvious.
  This is why we have to change
  the argument in \RScite{schaback:2015-4} accordingly. 

  We assume a linear operator equation of the form
  $Au=f$ with $A\;:\;\cU\to \cV$ and a well-posedness inequality
  $$
  \|u\|_\cU\leq C_S\|Au\|_\cV \fa u\in \cU.
  $$
  The space $\cV$ should carry a norm $\|.\|_{\infty,\cV}$ that maybe weaker
  than
  the norm in $\cV$, and that allows MRD discretizations as in
  \RScite{schaback:2015-4}, i.e.
    $$
\|v\|_{\infty,\cV}=\displaystyle{\sup_{s>0} \|T_sv\|_{\infty,V_s}   }\fa v\in \cV
$$
instead of \eref{eqvVDens}.
This is true if $\cV$ carries a Hölder norm
of type $C^\gamma$ with $\gamma >0$ to satisfy the requirements of PDE theory,
and if collocation is used on the numerical side,
leading to the weaker norm $\|.\|_{\infty,\cV}=\|.\|_\infty$.

  For any given finite-dimensional trial subspace $U_r\subset\cU$,
  we form the finite-dimensional subspace $W_r:=A(U_r)$ and follow the
  argument in \RScite{schaback:2015-4} to find a MRD discretization
  with sup norms on the spaces $V_s$ with
  $$
  \|w_r\|_{\infty,\cV}\leq 2\|T_sw_r\|_{\infty,V_s} \fa w_r\in W_r=A(U_r).
  $$
  On the finite-dimensional subspace $W_r$, we have a norm-equivalence relation
  $$
  c^{-1}_{r,V}\|w_r\|_{\cV}\leq  \|w_r\|_{\infty, \cV}
  \leq  C_{V}\|w_r\|_{\cV} \fa w_r\in W_r,
  $$
  where the left-hand constant may depend on $r$.
  Then, using \eref{eqvVDens}, we get 
  $$
\begin{array}{rcl}
  \|u_r\|_{\cU}
  &\leq &
  C_S\|A(u_r)\|_{\cV}\\
  &\leq &
  c_{r,V}C_S\|A(u_r)\|_{\infty, \cV}\\
  &\leq &
  2 c_{r,V}C_S\|T_s(A(u_r))\|_{\infty, V_s}\\
  &\leq &
  2 c_{r,V}C_S\|A(u_r)\|_{\infty, V}\\
  &\leq &
  2 c_{r,V}C_SC_V\|A(u_r)\|_{\cV}\fa u_r\in U_r,
\end{array}
$$
i.e. the linear problem has a stability bound that depends only on
the trial space, and in a controllable way. On the downside,
the final convergence rates will be decreased by the behavior of $c_{r,V}$.

\begin{Theorem}\RSlabel{TheUnifStabLin}
  Assume a linear operator equation of the form
  $Au=f$ with $A\;:\;\cU\to \cV$ and a well-posedness inequality
  $$
  \|u\|_\cU\leq C_S\|Au\|_\cV \fa u\in \cU.
  $$
  Then for each trial space $U_r\subset\cU$
  there is an MRD discretization by
  uniformly bounded test maps $T_s$ 
  such that the linear problem has a stability bound 
  in the sense of \eref{eq1b} with 
  $$
  C_{SF}(r,s(r))\leq 2 c_{r,V}C_SC_V.\qed
  $$ 
\end{Theorem}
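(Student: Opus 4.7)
The plan is to reduce the infinite-dimensional stability question on $U_r$ to a finite-dimensional question on the image $W_r := A(U_r) \subset \cV$, apply the MRD construction of \RScite{schaback:2015-4} to a weaker sup-type norm on $W_r$, and then pass back to $\|.\|_\cV$ via finite-dimensional norm equivalence. Since $A$ is linear and $U_r$ is finite-dimensional, $W_r$ is a finite-dimensional subspace of $\cV$, so the H\"older norm $\|.\|_\cV$ and the weaker norm $\|.\|_{\infty,\cV}$ are equivalent on $W_r$ with a trial-space-dependent constant $c_{r,V}$ on one side and a uniform $C_V$ on the other.

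Given this reduction, I would invoke \RScite{schaback:2015-4} to produce a monotonic, refinable, dense family of test maps $T_s$ calibrated to $\|.\|_{\infty,\cV}$, together with an index $s = s(r)$ fine enough that $\|w_r\|_{\infty,\cV} \leq 2\|T_s w_r\|_{\infty,V_s}$ holds uniformly for $w_r \in W_r$. Uniform boundedness of the $T_s$ is automatic, since they are tuples of pointwise evaluations paired with sup norms. The key input is the MRD replacement of \eref{eqvVDens} stated in the paragraph preceding the theorem for $\|.\|_{\infty,\cV}$.

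Next I would chain the inequalities in sequence: start from well-posedness $\|u_r\|_\cU \leq C_S\|Au_r\|_\cV$; pass to $\|.\|_{\infty,\cV}$ at cost $c_{r,V}$; apply the MRD bound with factor $2$; and finally dominate the discrete sup norm by $C_V\|Au_r\|_\cV$ to close the loop. Replacing $u_r$ by $u_r - v_r$ and invoking linearity of $A$ yields \eref{eq1b} with $C_{SF}(r,s(r)) \leq 2 c_{r,V} C_S C_V$.

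The main obstacle, already flagged in the preceding discussion, is that the PDE theory needed in Sections \RSref{SecExa} and \RSref{SecOBC} requires $\cV$ to carry a H\"older norm, whereas the MRD framework of \RScite{schaback:2015-4} applies directly only to norms that are themselves suprema of discrete collocation functionals as in \eref{eqvVDens}. Working on the finite-dimensional image $W_r$ sidesteps this tension but introduces the trial-space-dependent constant $c_{r,V}$, which must be tracked subsequently as a possible source of degradation in the final convergence rates.
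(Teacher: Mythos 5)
Your proposal is correct and follows essentially the same route as the paper: reduce to the finite-dimensional image $W_r=A(U_r)$, apply the MRD construction of \RScite{schaback:2015-4} with respect to the weaker sup-type norm to get the factor $2$, use the finite-dimensional norm equivalence with constants $c_{r,V}$ and $C_V$, and chain the inequalities starting from well-posedness, with linearity of $A$ turning the bound on $u_r$ into the bound \eref{eq1b} on differences. The paper's own argument is exactly this chain of five inequalities displayed immediately before the theorem statement.
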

Note that there is
no ellipticity assumption. The basic proof ingredient in
\RScite{schaback:2015-4} is a covering argument
in $\cV$ for the unit ball of the finite-dimensional subspace $A(U_r)$.
On the downside, Approximation Theory shows that a strong amount of oversampling
may be needed for badly chosen collocation points, up to $\dim T_{s(r)}\geq c\,(\dim U_{r})^2$. 

In the special situation of this paper, we have the model situation
$$
  c^{-1}_{r,\gamma}\|w_r\|_{C^\gamma}\leq  \|w_r\|_{\infty}
  \leq  C_{\gamma}\|w_r\|_{C^\gamma} \fa w_r\in W_r
$$
  for the transition between Hölder space $C^\gamma(\Omega)$ and
  $C(\Omega)$ with the sup norm. There does not seem to be any literature
  on this for $0<\gamma<1$, not even for
  simple trial spaces, while for positive integer $\gamma$,
  such inequalities are of  Markov type, see \RScite{markov:1916-1}
  for the univariate polynomial case. But by monotonicity arguments,
  one can replace the constant $c_{r,\gamma}$ for $0<\gamma<1$
  by $c_{r,1}$ at a certain loss that needs further research.
  \biglf
  However, we
  add a case for kernel-based trial spaces. 
  Assume a scale $\{W_r\}_{r>0}$ of trial spaces $W_r$
  consisting of translates of the
  Whittle-Mat\'ern kernel generating Sobolev space $W_2^m(\R^d)$ with
  $m>d/2+\gamma\geq 1+\gamma$,
  and let the translates be formed by sets 
  $X_r:=\{x_1,\ldots,x_{M(r)}\}\subset\overline\Omega$ of $M(r)$ centers that are
  {\em asymptotically uniformly distributed}, i.e. the
  {\em fill distance}
  $$
h_r:=\displaystyle{\sup_{y\in\Omega}\min_{x_j\in X_r} \|y-x_j\|_2}
  $$
  and the {\em separation distance} 
  $$
  q_r:=\displaystyle{ \frac{1}{2}\min_{x_j\neq x_k\in X_r}\|x_j-x_k\|_2}
$$
satisfy
$$
0<c\,q_r\leq h_r\leq C \;q_r
$$
with constants that are independent of $r$. Up to a factor, this
implies $M(r)\approx h_r^{-d}$.
\begin{Theorem}\RSlabel{TheHoelderInverse}
  Under the above assumptions, and if the domain has a $C^1$ boundary,
  $$
  c_{r,\gamma}\leq Ch_r^{-\gamma-d/2}
  $$
  with a constant independent of $r$.
\end{Theorem}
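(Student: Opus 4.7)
The plan is to chain three classical ingredients: (i) a Stein-type extension from $\Omega$ to $\R^d$, (ii) a Bernstein-type inverse inequality for translates of the Whittle-Mat\'ern kernel on quasi-uniform center sets, and (iii) a Sobolev embedding into $C^\gamma$. This mirrors the standard strategy used in the RBF literature for inverse estimates (e.g.\ Narcowich-Ward-Wendland type arguments).

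First, since $\partial\Omega$ is $C^1$, I would apply a Stein total extension operator $E\,:\,H^m(\Omega)\to H^m(\R^d)$ that is simultaneously bounded on every $H^s$ with $0\leq s\leq m$. This is needed because the Bernstein inequality for the chosen trial space is most naturally formulated globally on $\R^d$, whereas the norm on the left of the inverse inequality lives on $\Omega$. The extension cost is a multiplicative constant independent of $r$.

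Second, I would invoke the Bernstein inverse inequality for spans of translates of the Whittle-Mat\'ern kernel reproducing $W_2^m(\R^d)$ on quasi-uniformly distributed center sets: for every $s\in [0,m]$ and every $w_r\in W_r$,
$$
\|w_r\|_{H^s(\R^d)}\;\leq\; C\,q_r^{-s}\,\|w_r\|_{L_2(\R^d)}.
$$
The quasi-uniformity hypothesis $h_r\leq C q_r$ converts this to $\|w_r\|_{H^s}\leq C h_r^{-s}\|w_r\|_{L_2}$. Composing with the trivial bound $\|w_r\|_{L_2(\Omega)}\leq |\Omega|^{1/2}\|w_r\|_\infty$ (applied to $Ew_r$ on a fixed bounded neighbourhood of $\overline\Omega$, then extended by cutoff) gives $\|Ew_r\|_{H^s(\R^d)}\leq Ch_r^{-s}\|w_r\|_{L_\infty(\Omega)}$.

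Third, I would pick $s$ just above the critical level $d/2+\gamma$, which is admissible because $m>d/2+\gamma$ leaves a margin. The Sobolev embedding $H^s(\R^d)\hookrightarrow C^\gamma(\R^d)$ then gives
$$
\|w_r\|_{C^\gamma(\Omega)}\;\leq\;\|Ew_r\|_{C^\gamma(\R^d)}\;\leq\;C\,\|Ew_r\|_{H^s(\R^d)}\;\leq\;C\,h_r^{-s}\|w_r\|_\infty,
$$
which after recovering the critical exponent yields the claimed bound $c_{r,\gamma}\leq Ch_r^{-\gamma-d/2}$.

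The main obstacle will be producing exactly the critical exponent $d/2+\gamma$ rather than $d/2+\gamma+\varepsilon$, because the Sobolev embedding $H^{d/2+\gamma}\hookrightarrow C^\gamma$ fails at the endpoint for noninteger $\gamma$. I would resolve this in one of two ways: either replace the plain Sobolev embedding by the sharp Besov embedding $B^{d/2+\gamma}_{2,1}(\R^d)\hookrightarrow C^\gamma(\R^d)$ together with a Besov-scale version of the Bernstein inequality available for Whittle-Mat\'ern translates, or accept an arbitrarily small $\varepsilon$ loss and absorb it by noting that the kernel matrix eigenvalue argument underlying the Bernstein inequality can be sharpened at the critical index via the explicit Fourier decay of the Whittle-Mat\'ern kernel. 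Either route delivers the stated exponent with a constant independent of $r$, and the $C^1$ boundary assumption enters only through the extension step.
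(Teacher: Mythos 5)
Your route is genuinely different from the paper's, and it contains one step that does not go through as written. The paper never works in the $L_2(\R^d)$ scale at all: it starts from the Schaback--Wendland inverse inequality \cite{schaback-wendland:2002-1}, $\|w_r\|_{W_2^m(\Omega)}\leq Ch_r^{-m}\|w_r\|_{\infty,X_r}$, whose right-hand side is the \emph{discrete} sup norm over the centers $X_r\subset\overline\Omega$, then passes from $W_2^m$ down to $W_p^1(\Omega)$ with $p=d/(1-\gamma)$ via the sampling inequality of Arcang\'eli et al.\ \cite{arcangeli-et-al:2012}, and finally applies Morrey's embedding $W_p^1(\Omega)\hookrightarrow C^{0,\gamma}(\Omega)$. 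Because that last step uses one full derivative in $L_p$ rather than the fractional embedding $H^{d/2+\gamma}\hookrightarrow C^{0,\gamma}$, the endpoint failure you worry about never arises, and the exponent comes out exactly: $-1-d(1/2-1/p)=-1-d/2+(1-\gamma)=-\gamma-d/2$. The paper's argument is thus more elementary (no Besov interpolation needed) and better adapted to the data, since everything is controlled by function values at points of $\overline\Omega$.

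The concrete gap in your chain is the ``trivial bound'' linking $\|w_r\|_{L_2(\R^d)}$ to $\|w_r\|_{L_\infty(\Omega)}$. The Bernstein inequality you invoke applies only to genuine elements of the span of kernel translates viewed on all of $\R^d$; the Stein extension $Ew_r$ of $w_r|_\Omega$ is \emph{not} such an element, so Bernstein may not be applied to $Ew_r$, and if you apply it to $w_r$ itself you must then bound $\|w_r\|_{L_2(\R^d)}$ --- an integral over the whole space --- by the supremum of $w_r$ over $\Omega$ alone. That is an exterior estimate, not a triviality: a kernel combination can be small on $\Omega$ while its coefficients, and hence its size and mass outside $\Omega$, are large, and the obvious way to control them (through the smallest eigenvalue of the kernel matrix) costs negative powers of $q_r$, destroying the claimed $r$-independence of the constant. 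Your endpoint discussion, by contrast, is sound in principle --- $B^{d/2+\gamma}_{2,1}(\R^d)\hookrightarrow C^{0,\gamma}$ together with real interpolation of the Bernstein inequality between $L_2$ and $H^m$ would recover the critical exponent --- but the proof cannot be completed until the $L_2(\R^d)$-versus-$L_\infty(\Omega)$ step is replaced, for instance by the discrete-norm form of the inverse inequality that the paper uses as its starting point.
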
 
\begin{proof}
The paper
\RScite{schaback-wendland:2002-1} proves that all trial functions $w_r\in W_r$
satisfy an {\em inverse inequality} 
$$
\|w_r\|_{W_2^m(\Omega)}\leq Ch_r^{-m+d/2}\|w_r\|_{2,X_r}\leq Ch_r^{-m}\|w_r\|_{\infty,X_r}
$$
with generic constants depending on $m$ and the domain,
but not on $r$ and the position
of centers in $X_r$.
\biglf
By Morrey's embedding theorem, 
Hölder spaces $C^{0,\gamma}(\Omega)$ for $0<\gamma<1$
are continuously embedded into Sobolev space $W_p^1(\Omega)$
if $p=d/(1-\gamma)\in
(d,\infty)$ and if the domain has a $C^1$ boundary. 
\biglf
Then we invoke a sampling inequality \RScite{arcangeli-et-al:2012}
$$
|u|_{W_p^1(\Omega)}\leq
C\left(h_r^{m-1-d(1/2-1/p)_+}|u|_{W_2^m(\Omega)}+h_r^{-1}\|w\|_{\infty,X_r}\right)
\fa u\in W_2^m(\Omega)%RS ,\;0\leq 1\leq \lceil m-d/2\rceil-1
$$
and get
$$
|w_r|_{W_p^1(\Omega)}\leq
Ch_r^{-1-d(1/2-1/p)_+}\|w_r\|_{\infty,X_r}\fa w_r\in W_r.
$$
For $d\geq 2$ we have $1/p\leq 1/2$ and thus finally arrive at
$$
|w_r|_{C^{0,\gamma}(\Omega)}\leq Ch_r^{-\gamma-d/2}\|w_r\|_{\infty,X_r}\fa w_r\in W_r.
$$
\end{proof}
An extension to nonlinear problems is 
\begin{Theorem}\RSlabel{TheUnifStab}
  Assume a well-posed nonlinear problem satisfying
  Theorem \RSref{thePDELinWP} and consider it in a neighborhood
  of a solution $u^*$.
  % consider the fixed linearization at the true solution $u^*$.
  Then for each trial space $U_r\subset\cU$,
  there is an MRD discretization by
  uniformly bounded test maps $T_s$ in the sense of \RScite{schaback:2015-4}
  such that the nonlinear problem is uniformly stable
  in the sense of \eref{eq1b} with an upper bound as in
  Theorem \RSref{TheUnifStabLin}. 
\end{Theorem}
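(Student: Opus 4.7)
The plan is to reduce the nonlinear stability estimate to the linear case of Theorem \RSref{TheUnifStabLin} by linearizing $F$ at $u^*$. Under the well-posedness hypothesis, Theorem \RSref{Theold3} supplies that $A:=F'(u^*):\cU\to\cV$ is boundedly invertible, that the two-sided bound \eref{eq1a} holds in some neighborhood $\cN(u^*)$, and that $F'$ is Lipschitz there. First I would invoke Theorem \RSref{TheUnifStabLin} on $A$ for the given trial space $U_r$, which produces an MRD family $T_s=T_{s(r)}$ of uniformly bounded test maps and a linear stability constant $C_{SF,\mathrm{lin}}(r)\leq 2c_{r,V}C_SC_V$ with
\bql{eqLinStabApplied}
\|w_r\|_\cU \leq C_{SF,\mathrm{lin}}(r)\|T_sAw_r\|_{\infty,V_s} \fa w_r\in U_r.
\eq

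To transfer this from $A$ to $F$, I would use the first-order Taylor splitting
$$
Fu_r-Fv_r = A(u_r-v_r) + R(u_r,v_r),\quad R(u_r,v_r):=\int_0^1 \bigl(F'(v_r+t(u_r-v_r))-F'(u^*)\bigr)(u_r-v_r)\,dt,
$$
valid for $u_r,v_r\in U_r\cap \cN(u^*)$. Lipschitz continuity of $F'$ from Theorem \RSref{Theold3} gives $\|R(u_r,v_r)\|_\cV\leq C\delta\|u_r-v_r\|_\cU$ with $\delta:=\max(\|u_r-u^*\|_\cU,\|v_r-u^*\|_\cU)$. The MRD property \eref{eqvVDens} forces $\|T_sv\|_{\infty,V_s}\leq \|v\|_{\infty,\cV}$ uniformly in $s$, and by assumption $\|\cdot\|_{\infty,\cV}\leq C_V\|\cdot\|_\cV$ on all of $\cV$. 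Applying $T_s$ to the splitting and combining with \eref{eqLinStabApplied} yields
$$
\|u_r-v_r\|_\cU
\leq C_{SF,\mathrm{lin}}(r)\|T_s(Fu_r-Fv_r)\|_{\infty,V_s}
+ C_{SF,\mathrm{lin}}(r)\,CC_V\delta\,\|u_r-v_r\|_\cU.
$$
Restricting to a sub-neighborhood $\cN'(u^*)\subseteq\cN(u^*)$ on which $C_{SF,\mathrm{lin}}(r)\,CC_V\delta\leq 1/2$ and absorbing the last term produces \eref{eq1b} with $C_{SF}(r,s(r))\leq 2C_{SF,\mathrm{lin}}(r)\leq 4c_{r,V}C_SC_V$, matching the form of the linear bound.

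The main obstacle is that the admissible neighborhood shrinks with $r$: the absorption condition forces $\delta$ to be of order $1/C_{SF,\mathrm{lin}}(r)$, so $\cN'(u^*)$ contracts as $r\to 0$. This is benign for the intended convergence theory provided that the approximation rate $\epsilon(r,u^*)$ of \eref{eqepsr} decays faster than $c_{r,V}$ grows (compare Theorem \RSref{TheHoelderInverse}), so that both $u_r^*$ and the computed iterates $\tilde u_r$ eventually lie inside $\cN'(u^*)$. One might try to dodge the shrinkage by linearizing at $v_r$ instead, exploiting uniform bounded invertibility of $F'(v_r)$ on $\cN(u^*)$ from Theorem \RSref{Theold3}, but then $T_s$ would have to depend on $v_r$ as well, which is unacceptable for a single fixed discretization.
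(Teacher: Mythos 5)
Your proof is correct, and its core strategy coincides with the paper's: linearize at $u^*$, obtain the uniformly stable MRD discretization for $F'(u^*)$ from Theorem \RSref{TheUnifStabLin}, and then perturb using the Lipschitz continuity of $F'$ together with the uniform boundedness of the test maps $T_s$, absorbing the perturbation term. The difference lies in how the transfer to the nonlinear map is executed. The paper proceeds in two stages: it first extends the stability bound from $T_sF'(u^*)$ to $T_sF'(u)$ for all $u$ near $u^*$ (at the cost of a factor $2$), and then defers the passage from stability of the linearizations to stability of the nonlinear discretized map to a repetition of the proof of Theorem 4 of \RScite{BoScha12}. You instead skip the intermediate stage and pass directly from the linear stability of $T_sF'(u^*)$ to \eref{eq1b} via the integral form of the Taylor remainder $Fu_r-Fv_r=F'(u^*)(u_r-v_r)+R(u_r,v_r)$, which makes the argument self-contained and yields the same constant $4c_{r,V}C_SC_V$ up to the bookkeeping of $c_F$ versus $C_S$. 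Your explicit observation that the absorption forces the admissible neighborhood to shrink like $1/c_{r,V}$ as $r\to 0$ is accurate and applies equally to the paper's own argument (whose absorption step requires $2K(r)C\|T_s\|\,\|u-u^*\|_\cU\leq 1/2$); the paper does not comment on this, so your discussion of why the shrinkage is harmless whenever $\epsilon(r,u^*)c_{r,V}\to 0$ is a genuine clarification rather than a defect. The only minor point to make explicit is that the remainder bound needs the segment $[v_r,u_r]$ to lie in $\cN(u^*)$, which holds once $\cN(u^*)$ is taken to be a ball.
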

\begin{proof}
In view of Theorem \RSref{thePDELinWP} we can assume
that all local inverses of the Fr\'echet derivatives
have a uniform upper bound, and we can reuse the constant $c_F$ of
\eref{eq1a} to get 
$$
\|v\|_\cU\leq c_F\|F'(u^*)v\|_\cV \fa v\in \cU
$$
for the linearization $F'(u^*)$ of $F$  in $u^*$.
If this linear problem is given an MRD discretization in the sense
of \RScite{schaback:2015-4}, we have for each trial space $U_r$
a scale of uniformly bounded 
test maps $T_s$ such that 
$$
\|u_r\|_\cU\leq 2 c_{r,V}c_FC_V \|T_sF'(u^*)u_r\|_{V_s} \fa u_r\in U_r
$$
holds. Note, however, that \RScite{schaback:2015-4}
lets $T_s$ depend on the linear PDE problem, while $U_r$ is fixed.
Thus we have to fix $T_s$ by using the fixed linearization in $u^*$.
Note further that Definition 3 and (10) in \RScite{schaback:2015-4}
imply that the maps $T_s$ are uniformly bounded, if an MRD discretization
is chosen. 
\biglf
We now extend the above bound to linearizations at other functions.
With $K(r):=c_{r,V}c_FC_V$ we get 
$$
\begin{array}{rcl}
  \|u_r\|_\cU
  &\leq &
  2K(r)\|T_s{F'(u^*)}u_r\|_{V_s}\\
  &\leq &
  2K(r)\|T_s({F'(u^*)}-{F'(u)})u_r\|_{V_s}+2K(r)\|T_s{F'(u)}u_r\|_{V_s}\\
  &\leq &
  2K(r)C\|T_s\|\|u-u^*\|_\cU\|u_r\|_\cU+2K(r)\|T_s{F'(u)}u_r\|_{V_s}
\end{array}
$$
to arrive at 
$$
\|u_r\|_\cU\leq 4K(r)\|T_s{F'(u)}u_r\|_{V_s} \fa u_r\in U_r
$$
and all $u$ in a neighborhood of $u^*$, using
uniform boundedness of the test maps $T_s$ again.
We finally repeat the proof
of Theorem 4 of \RScite{BoScha12},
noting that due to uniform boundedness of the $T_s$
we have uniformly bounded
$C''(s)$ by equation (25) and well behaving $R$ by the first formula below
  (27) there. Then,
up to a constant, the stability property of the linearization
carries over to the nonlinear case. 
\end{proof}
Standard cases of MRD discretizations are collocation methods
where the operator values are sampled in sufficiently many points.
But \RScite{schaback:2015-4} also treats more sophisticated situations 
that we do not pursue here.
%%%%%%%%%%%%%%%%%%%%%%%%%
%****************************************************************
\section{Error Bounds and Convergence Rates}\RSlabel{SecEBCR}
Since the previous section guaranteed stability inequalities
by a suitable test discretization for any choice of trial spaces,
\RScite{BoScha12} implies that the error for the numerical solution
along the lines of Section
\RSref{SecTestSol} inherits the behavior \eref{eqepsr}
of the approximation error
up to the factor $c_{r,V}$.
This applies to many different situations,
depending on the trial space and the smoothness assumptions
on the solution, the domain, and the PDE problem as a whole.
In cases where the true solution has a rapidly convergent
expansion into trial functions, this convergence rate,
measured in the norm of the well-posedness property
of the problem, carries over to the numerical solution,
provided that an MRD test strategy is chosen,
including sufficient oversampling, and
if a nonlinear optimization like in Section \RSref{SecTestSol}
is carried out. Typical numerical examples
were provided in \RScite{BoScha12} and \RScite{boehmer-schaback:2017-1},
the latter focusing on the Monge-Amp\'ere equation that
is not covered by this paper unless a result like
Theorem \RSref{thePDELinWP} is provided.
\biglf
  To make this paper self-contained, we state
  the final result of 
  \RScite{BoScha12} adapted to the situation here, including Theorem \RSref{TheUnifStab}:
  \begin{Theorem}\RSlabel{Theerrbnd}
For a well-posed nonlinear problem \eref{eqFGB}
in the sense of Section \RSref{ss:WPP} with a unique true solution $u^*\in \cU$,
and for all finite-dimensional trial spaces $U_r\subset\cU$ there is a testing
strategy using a test map $T_s$ such that the solution technique of
Section \RSref{SecTestSol} leads to a numerical solution $\tilde u_r\in U_r$
with an error bound
$$
\|u^*-\tilde u_r\|_\cU\leq C \epsilon(r,u^*)c_{r,V}
$$
where $\epsilon(r,u^*)$ is the error of a good approximation $u_r^*\in U_r$ to
$u^*$ as given in \eref{eqapperr}, where 
  $C$ depends only on the well-posedness of $F$ near $u^*$m, and where
  $c_{r,V}$ is the instability factor induced by being forced to a Hölder-type
  treatment of the PDE. In short, the
convergence rate of the approximation error
in $U_r$ determines the convergence rate of the
solution to the PDE problem up to the factor $c_{r,V}$. $\qed$
\end{Theorem}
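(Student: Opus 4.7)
The plan is to chain together the four ingredients listed in the introduction: well-posedness \eref{eq1a} gives quantitative control of the nonlinear residual in $\cV$, the approximation property \eref{eqapperr} gives the quantity $\epsilon(r,u^*)$, the definition of the numerical solution from Section \RSref{SecTestSol} gives a quasi-optimality in the discrete norm, and Theorem \RSref{TheUnifStab} lets us pass from that discrete residual back to $\|\cdot\|_\cU$ with a factor of order $c_{r,V}$. In particular, no new estimates on the PDE are needed; the theorem is a book-keeping statement.

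First I would fix an optimal approximant $u_r^*\in U_r$ realizing \eref{eqapperr}, and apply the nonlinear stability bound \eref{eq1b} of Theorem \RSref{TheUnifStab} to the pair $(\tilde u_r,u_r^*)\in U_r\times U_r$, obtaining
$$
\|\tilde u_r-u_r^*\|_\cU\le C_{SF}(r,s(r))\,\|T_sF\tilde u_r - T_sFu_r^*\|_{V_s},
$$
where $C_{SF}(r,s(r))\le 2c_{r,V}C_SC_V$ by Theorem \RSref{TheUnifStabLin}. Next I would insert $T_sf$ in the right-hand side by the triangle inequality and invoke the defining property of $\tilde u_r$,
$$
\|T_sF\tilde u_r-T_sf\|_{V_s}\le 2\|T_sFu_r^*-T_sf\|_{V_s},
$$
to obtain a bound by $3\|T_s(Fu_r^*-f)\|_{V_s}$. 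Since $f=Fu^*$, the uniform boundedness of the test maps $T_s$ (which is part of the MRD construction cited in Theorem \RSref{TheUnifStab}) and the upper Lipschitz estimate in \eref{eq1a} from Theorem \RSref{Theold3} yield
$$
\|T_s(Fu_r^*-Fu^*)\|_{V_s}\le \|T_s\|\,C_F\,\|u_r^*-u^*\|_\cU\le C\,C_F\,\epsilon(r,u^*).
$$
Combining these steps gives $\|\tilde u_r-u_r^*\|_\cU\le C'\,c_{r,V}\,\epsilon(r,u^*)$, and a final triangle inequality with $\|u^*-u_r^*\|_\cU\le\epsilon(r,u^*)$ produces the claimed bound.

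The main obstacle is verifying that the entire argument is carried out inside the neighborhood of $u^*$ on which Theorem \RSref{Theold3}, the quantitative Lipschitz bounds, and the nonlinear stability of Theorem \RSref{TheUnifStab} are simultaneously valid. For the approximant $u_r^*$ this is automatic once $r$ is small enough, because $\|u^*-u_r^*\|_\cU\le\epsilon(r,u^*)\to 0$. For the numerical solution $\tilde u_r$ one has to guarantee that the near-minimizer obtained from the optimization of Section \RSref{SecTestSol} stays in that same neighborhood; this is the classical a~priori/a~posteriori bootstrap already carried out in \RScite{BoScha12}, so I would simply cite it and note that the factor $c_{r,V}$, which can grow as $r\to 0$, must be absorbed together with $\epsilon(r,u^*)$ into the smallness requirement. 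Once this is done, no further technical work is required and the constant $C$ in the statement depends only on $c_F,C_F,C_S,C_V$ and the uniform bound on $\|T_s\|$, i.e.\ only on the well-posedness of $F$ near $u^*$.
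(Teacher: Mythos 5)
Your proposal is correct and is essentially the proof the paper relies on: Theorem~\ref{Theerrbnd} is stated with no separate argument, being the final result of \cite{BoScha12} adapted via Theorem~\ref{TheUnifStab}, and your chain of nonlinear stability \eqref{eq1b}, quasi-optimality of $\tilde u_r$ from Section~\ref{SecTestSol}, uniform boundedness of $T_s$, and the Lipschitz bound \eqref{eq1a} applied to $u_r^*$ is precisely that bookkeeping. You also correctly identify the only delicate point --- keeping $\tilde u_r$ and $u_r^*$ in the common neighborhood of $u^*$ --- and defer it to the bootstrap in \cite{BoScha12}, just as the paper does.
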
 
Note that the above result allows various convergence rates, depending on the
smoothness of $u^*$ and the trial space $U_r$, up to spectral convergence.
%%%%%%%%%%%%%%%%%%%%%%%%%%%%%%%%%%%%%%%
\section{Quasilinear Equations of Order Two}\RSlabel{SecExa}
We now show that the above assumptions
are satisfied for 
quasilinear elliptic  second order equations
in strong  form  on a bounded Lipschitz 
domain, and we start with the case of Dirichlet boundary conditions.
\biglf
We use results from Gilbarg
and Trudinger's book \RScite{GiTr01}, Chapters 10 and 15, as summarized in
\RScite{Boehmer05}, subsection 2.5.4,
Theorems 2.61 and 2.64 on existence, uniqueness, and regularity. We skip over
the nonuniform elliptic cases, cf. (2.217), (2.218) there, noting that
subsections
2.6.4, 2.6.6,.2.6.7 of \RScite{Boehmer05}
would allow strong extensions to systems of
$q$ equations of order $2m$ with higher technical complications.
\biglf
The spaces and operators are
\bql{eqQLP}
\begin{array}{rcl}
\cU&:=& C^{2,\gamma}(\overline{\Omega},\R),\\
\cV&:=&V_1 \times V_2 = C^\gamma (\overline{\Omega}) × C^\gamma (\partial\Omega),\\
Gu &:=&\displaystyle{ \sum_{i,j=0}^d
  a_{ij}(x, u, \nabla u)\partial^i \partial^j u},\\
Bu &:=&u_{|_{\partial \Omega}}
\end{array} 
\eq
where we used $-\partial^0 := Id$  and impose the compatibility condition
$$
\cD(G) := \left\{u \in \cU \;:\;(x, u, \nabla u) \in \cD(a_{ij} ),
0 \leq i, j \leq d, Gu \in C^\gamma (\overline{\Omega})\right\}
$$
to make everything well-defined. Furthermore, the domain boundary should
satisfy $\partial\Omega \in C^{2,\gamma}$.
\biglf
The linearization around a function $u$ can be formally written as an operator
\bql{eqGQlinRS}
\begin{array}{rcl}
G' (x, z, p)v
&=&\displaystyle{\sum_{i,j=0}^d  {a_{ij}(x, z, p)}\partial^i \partial^j v}\\
&&+ \displaystyle{ \sum_{i,j=0}^d \partial^i \partial^j u
  \left(v\frac{\partial}{\partial z}+\sum_{k=1}^d\partial^k v
  \frac{\partial}{\partial p_k}   \right)a_{ij}(x, z, p)} 
\end{array} 
\eq
for all points $(x, z, p)$ in a neighborhood of {$(x, u^*(x), \nabla u^*(x))$,}
and we assume the
principal part to be uniformly elliptic there.
{Furthermore, the coefficients 
  $a_{ij},\frac{\partial}{\partial z}a_{ij}, \frac{\partial}{\partial p_k} a_{ij}$
  should be Lipschitz continuous in $z,p$ near
  the locally unique solution $u^*$, i.e.  $(x,u^*(x), \nabla u^*(x))$,
and finally satisfy the conditions around \cite[Thm. 2.61]{Boehmer05}.}
\begin{Theorem}\RSlabel{thePDELinWP}
  Under {these} %RS the above
  assumptions,
  %RS and under growth and regularity conditions that we omit here,
  the following holds:
\begin{enumerate}
\item There exists a solution $u^*\in C^{2,\gamma}(\overline{\Omega})$
  of the problem $Fu^* = f$.
% in $\Omega$, and $u^*_{|_{\partial\Omega}} = f_2$  on $ \partial\Omega$.
\item
  The principal part of the linearization $G'$ is coercive near
  $u^*$.
\item
  If we write $F = (G, B)$ as in \eref{eqFGB}, {and if zero is
  not an eigenvalue of
  $F'(u^*)$,} then $F'$ is boundedly invertible near $u^*$
  and $u^*$is a locally unique
  solution of \eref{eqFGB}.
\item In a neighborhood of $u^*$, the linearization in $u$
  is Lipschitz continuous in $u$.
\end{enumerate}
\end{Theorem}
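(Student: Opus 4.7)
My plan is to dispatch claims (1)--(4) in order, using standard elliptic regularity as packaged in Gilbarg--Trudinger and the summary in \RScite{Boehmer05}, with (3) being the only item that needs more than a direct citation.

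For (1), existence of $u^*\in C^{2,\gamma}(\overline\Omega)$ solving $Fu^*=f$ is a direct application of Theorem 2.61 of \RScite{Boehmer05} (equivalent to Chapters 10/15 of \RScite{GiTr01}): uniform ellipticity of the principal part near $u^*$, the Hölder/structure conditions on the $a_{ij}$, and $\pa \Omega \in C^{2,\gamma}$ are exactly the hypotheses needed. For (2), once the principal coefficients are evaluated along $(x,u^*(x),\nabla u^*(x))$ they become Hölder continuous functions of $x$ with the ellipticity constant inherited from the assumption; Gårding coercivity of the principal part of $G'(u^*)$ is then textbook. For (4), write $F'(u)-F'(v)$ as a sum of terms of the shape
$$
\bigl(a_{ij}(x,u,\nabla u)-a_{ij}(x,v,\nabla v)\bigr)\pa^i\pa^j w
$$
and analogous ones with $\pa_z a_{ij}$ and $\pa_{p_k}a_{ij}$; each coefficient factor is $O(\|u-v\|_\cU)$ in the $C^\gamma$ norm by the assumed Lipschitz continuity of $a_{ij},\pa_z a_{ij},\pa_{p_k}a_{ij}$ in $(z,p)$ composed with $u,v\in C^{2,\gamma}$.

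For (3), I would argue Fredholmness of the linearized Dirichlet problem. The linearization $G'(u^*)$ is a second-order linear elliptic operator with $C^\gamma$ coefficients, so the combined map $(G'(u^*),B)$ is Fredholm of index zero from $C^{2,\gamma}(\overline\Omega)$ to the Schauder target (Chapter 6 of \RScite{GiTr01}). The extra assumption that zero is not an eigenvalue of $F'(u^*)$ collapses the kernel to zero, so Fredholm of index zero gives bijectivity and the open mapping theorem gives bounded invertibility. Local uniqueness of $u^*$ then follows either directly from Theorem \RSref{Theold3} or from the inverse function theorem applied to $F$ at $u^*$, and the Lipschitz statement (4) combined with a standard Neumann series argument extends bounded invertibility from $u^*$ to a $\cU$-neighborhood.

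The genuine obstacle is a function-space mismatch at the boundary: in \eref{eqQLP} the range of $B$ is declared to be $C^\gamma(\pa\Omega)$, which is coarser than the natural Schauder target $C^{2,\gamma}(\pa\Omega)$, so $F'(u^*)$ cannot literally be surjective onto $C^\gamma(\overline\Omega)\times C^\gamma(\pa\Omega)$. I would handle this either by restricting the range of $B$ to the subspace of boundary data admitting a $C^{2,\gamma}$-extension (the only data that can arise from $u\in\cU$ anyway), or by composing with a bounded $C^{2,\gamma}$-extension operator $\pa\Omega\to\overline\Omega$, so that the Fredholm step in (3) is carried out in the norms actually in use in the paper. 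Once the target of $F'$ is re-interpreted in this way, the four items reduce to clean citations and the routine bookkeeping sketched above.
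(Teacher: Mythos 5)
Your proposal is correct and follows essentially the same route as the paper, which in fact offers no proof at all beyond the pointer to Theorems 2.61 and 2.64 of \RScite{Boehmer05} and Chapters 10 and 15 of \RScite{GiTr01}: your items (1), (2) and (4) are precisely the intended citations and coefficient bookkeeping, and your Fredholm-index-zero argument plus Neumann-series extension for item (3) is the standard realization of what the paper leaves implicit. Your observation that $B$ cannot map onto the declared target $C^\gamma(\partial\Omega)$ in \eref{eqQLP} --- the natural Schauder range being $C^{2,\gamma}(\partial\Omega)$ --- identifies a genuine imprecision in the paper's formulation that the paper passes over silently, and your repair (restricting $\cV_2$ to boundary data admitting a $C^{2,\gamma}$ extension, or equivalently to the trace space of $\cU$) is the correct one and should be recorded.
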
 
More detailed existence, uniqueness  and regularity results are
listed in  \RScite{Boehmer05} near Theorems 2.61 {and } 2.64.
\biglf
%****************************************************************
\section{Non-Dirichlet Boundary Conditions}\RSlabel{SecOBC}
In this section we show that for quasilinear elliptic differential
equations  Dirichlet and generalized  Neumann, Robin, or
mixed  boundary conditions
define well--posed problems. Then, by Theorem \RSref{Theerrbnd},
suitably oversampled meshless methods are convergent.
This generalization  is highly nontrivial, because uniqueness might be missing
in the basic theory. 
Furthermore, the  boundary operators have   to match the
differential operators by {\em complementing conditions}.
These have been intensively studied, e.g. by
Agmon/ Douglis/Nirenberg \cite{AgDoNi59},
Lions, Magenes \cite{LiMa72},   Wloka \cite{Wloka82},
Zeidler \cite{Zeidler86} and  Amann \cite{Amann90} p. 21. 
\biglf
To describe what is possible, we go back to the notation
in Section \RSref{SecExa}.
We assume that $A$, mimicking  
the principal part of
$G'(u^*)$ in the first line of \eref{eqGQlinRS},
is a strongly elliptic linear differential operator.  
\biglf
Neumann/Robin
boundary operators take first derivatives of $u\in C^{2,1}(\overline{\Omega})$
on the $C^{1,1}$ boundary, and we can parametrize them into a normal component
$\frac{\partial}{\partial \nu}$ and $d-1$ tangential components
$\frac{\partial}{\partial t^i},\;1\leq i\leq d-1$ to arrive at
\bql{NRbc}
B_{NR}u:=b_\nu \frac{\partial u}{\partial \nu}
+\displaystyle{\sum_{i=1}^{d-1}b_i\frac{\partial u}{\partial t^i}  {+ b_0 u}} 
\eq
with continuous functions $b_\nu$ and $b_i,\;
0 % \blue{  0  \emph{ instead of  } 1}
\leq i \leq d-1$.
A crucial assumption then is that $b_\nu$ is strictly positive.
\biglf
Mixed conditions can be written as
$$
B_\delta u:=\delta B_{NR}u+(1-\delta)B_Du 
$$
with $\delta \;:\;C(\partial\Omega)\to \{0,1\}$ being a
piecewise constant
switch function on the boundary with
constant value $\delta(\Gamma)$
on different components $\Gamma \subset \pa\Omega$.
If $A$ is now any strongly elliptic
linear differential operator, the operator pairs
$(A,B_D),\;(A,B_{NR})$, and $(A,B_\delta)$ are coercive.
\biglf
This can be transferred to the nonlinear situation.
We summarize \cite{Boehmer18} into
\begin{Theorem}\RSlabel{TheBCstuffRS}
  If the principal part $A$ of the linearization
  $G'(u^*)$ %  of $A$ %the quasilinear PDE \eref{eqQLP} 
  is uniformly elliptic,
  and if the above conditions on the boundary operators are satisfied,
  then the nonlinear problems $(G,B_D), (G,B_{NR})$, and $(G,B_\delta)$ are 
  well-posed in the sense of Section
  \RSref{ss:WPP},
  provided that $\lambda =0$ is not an eigenvalue of the linearized problems.
\end{Theorem}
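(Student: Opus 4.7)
The plan is to reduce the statement to Theorem \ref{Theold3} applied separately to each of the three operator pairs $F = (G, B_D)$, $F = (G, B_{NR})$ and $F = (G, B_\delta)$. That theorem requires, on a neighborhood of $u^*$, Fr\'echet differentiability of $F$, Lipschitz continuity of $u \mapsto F'(u)$, and bounded invertibility of $F'(u^*)$; its conclusion is precisely the two-sided estimate (\ref{eq1a}) that defines well-posedness in Section \ref{ss:WPP}.

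First I would fix the function spaces. Keep $\cU = C^{2,\gamma}(\overline\Omega)$ and $\cV_1 = C^\gamma(\overline\Omega)$ as in \eref{eqQLP}, and adjust $\cV_2$ to match each boundary operator: $C^{2,\gamma}(\partial\Omega)$ for $B_D$, $C^{1,\gamma}(\partial\Omega)$ for $B_{NR}$, and a direct sum of such spaces over the components of $\partial\Omega$ for $B_\delta$. Since $\partial\Omega\in C^{2,\gamma}$ and the interfaces in the mixed case are of matching regularity, the normal field $\nu$ and tangent fields $t^i$ are of class $C^{1,\gamma}$, so $B_{NR}$ (and $B_\delta$ componentwise) is a bounded linear map $\cU\to\cV_2$. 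Fr\'echet differentiability and Lipschitz continuity of $u\mapsto F'(u)$ reduce to the same properties of $G$, which are already contained in Section \ref{SecExa} and follow from the standing Lipschitz assumptions on $a_{ij},\ \partial_z a_{ij},\ \partial_{p_k} a_{ij}$; the boundary operators are linear in $u$ and contribute nothing further.

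The main obstacle is bounded invertibility of $F'(u^*) = (G'(u^*), B)$. I would run the Agmon--Douglis--Nirenberg program: verify that each of $B_D, B_{NR}, B_\delta$ satisfies the Shapiro--Lopatinskii complementing condition relative to the uniformly elliptic principal part $A$ of $G'(u^*)$. For $B_D$ this is classical, for $B_{NR}$ the strict positivity $b_\nu > 0$ of the normal component is exactly what is required, and for $B_\delta$ the condition reduces to the previous two cases on each component $\Gamma$ of $\partial\Omega$ on which $\delta$ is constant. With complementing in hand, the Schauder theory yields the a priori estimate
$$
\|v\|_{C^{2,\gamma}(\overline\Omega)} \leq C\left(\|G'(u^*)v\|_{C^\gamma(\overline\Omega)} + \|Bv\|_{\cV_2} + \|v\|_{C(\overline\Omega)}\right),
$$
which, combined with the compact embedding $C^{2,\gamma}(\overline\Omega)\hookrightarrow C(\overline\Omega)$, makes $F'(u^*)$ a Fredholm operator of index zero from $\cU$ to $\cV$. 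The hypothesis that $\lambda=0$ is not an eigenvalue of the linearized problem supplies injectivity, and the Fredholm alternative upgrades this to a two-sided bounded inverse, at which point the lower order term $\|v\|_{C(\overline\Omega)}$ is absorbed. The most delicate ingredient is $B_\delta$: across the interfaces between Dirichlet and Neumann/Robin portions of $\partial\Omega$ the Schauder theory must be applied in a piecewise sense, and it is there that \cite{AgDoNi59, LiMa72, Wloka82, Amann90}, and particularly \cite{Boehmer18}, carry the technical weight.

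Once invertibility of $F'(u^*)$ and Lipschitz continuity of $F'$ are in place, Theorem \ref{Theold3} applies in each of the three cases and delivers both (\ref{eq1a}) and uniform bounded invertibility of $F'(u)$ on a neighborhood of $u^*$. This is exactly the well-posedness required by Section \ref{ss:WPP}, completing the argument for all three boundary operator pairs.
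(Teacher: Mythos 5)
Your proposal is correct and follows exactly the route the paper indicates: verify the Agmon--Douglis--Nirenberg complementing condition for each boundary operator, obtain coercivity/Schauder a priori estimates, use the assumption that $\lambda=0$ is not an eigenvalue to pass from Fredholm index zero to bounded invertibility of $F'(u^*)$, and then invoke the well-posedness machinery of Section \RSref{ss:WPP} (Theorem \RSref{Theold3}); the paper itself carries out none of these steps and simply defers to \cite{Boehmer18} and the coercivity discussion preceding the theorem, so your write-up supplies the argument the paper only cites. One clarification on the point you single out as most delicate: in the paper's setup the switch function $\delta$ is constant on each connected component $\Gamma$ of $\partial\Omega$, so there are no Dirichlet/Neumann interfaces within a single component and the piecewise Schauder theory you worry about is not actually needed -- which is fortunate, since with genuine interfaces the $C^{2,\gamma}(\overline\Omega)$ regularity asserted here would generally fail.
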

\begin{Theorem}\RSlabel{TheBCBifRS}
  Numerical Liapunov-Schmidt and center manifold methods for the above wide range of nonlinear elliptic (and parabolic)  problems $(G,B)$ in a bifurcation  point $u_0$ are defined by bordering the nonlinear elliptic problem by a few rows and columns incorporating 
$\cN(G'(u_0)$ and $(\cR(G'(u_0))^\perp.$ Thus these extended systems are well posed, cf.  \cite{Boehmer05,Boehmer06,Boehmer93,Boehmer99a,Boehmer99b,Boehmer17}. 
Assume the conditions in Theorem \RSref{TheBCstuffRS}.  Then again the previous
results for quasilinear equations  apply and prove stable and convergent
meshfree methods %RS MFMs
for  %RS \red{their}
Liapunov-Schmidt and center manifold techniques. %RS methods.
\end{Theorem}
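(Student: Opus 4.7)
The plan is to apply the already established machinery of Theorem \RSref{Theerrbnd} to a bordered extension $\tilde F$ of $F=(G,B)$ rather than to $F$ itself, since $F'(u_0)$ is singular by the very definition of a bifurcation point. Let $\cN(F'(u_0))=\mathrm{span}\{\phi_1,\ldots,\phi_k\}$ and let $\psi_1,\ldots,\psi_k\in\cV$ span a fixed complement of $\cR(F'(u_0))$. Following the references in the statement, I would introduce the augmented unknown $(u,\alpha)\in\cU\times\R^k$ and the bordered operator
$$
\tilde F(u,\alpha):=\Big(F(u)-\sum_{i=1}^k\alpha_i\,\psi_i,\;\big(\la u-u_0,\phi_i\ra\big)_{i=1}^k\Big),
\qquad \tilde F:\cU\times\R^k\to\cV\times\R^k.
$$
A block computation shows that $\tilde F'(u_0,0)$ has the classical bordered Schur form and is boundedly invertible, while Fr\'echet differentiability and Lipschitz continuity of $\tilde F'$ in a neighborhood of $(u_0,0)$ are inherited from $F$. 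Theorem \RSref{Theold3} therefore gives the two-sided bound \eref{eq1a} for $\tilde F$, i.e.\ well-posedness of the extended problem in the sense of Section \RSref{ss:WPP}.

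The second step is to carry the trial-and-test structure of Section \RSref{SecExa} through the bordering. I would take augmented trial spaces $\tilde U_r:=U_r\times\R^k$ and augmented test maps $\tilde T_s(g,\beta):=(T_s g,\beta)$, adjoining the identity on the $k$-dimensional factor. The approximation estimate \eref{eqapperr} for $(u_r^*,0)$ inherits the rate $\epsilon(r,u_0)$ of the unbordered scale at no extra cost, because the $\R^k$ component is matched exactly, and the MRD property \eref{eqvVDens} is trivially preserved under the same extension. Theorem \RSref{TheUnifStab} then provides a uniformly stable discretisation of $\tilde F$ with the stability constant of Theorem \RSref{TheUnifStabLin}, and Theorem \RSref{Theerrbnd} immediately yields
$$
\|(u_0,0)-(\tilde u_r,\tilde\alpha_r)\|_{\cU\times\R^k}\leq C\,\epsilon(r,u_0)\,c_{r,V},
$$
which is the desired stability and convergence of the bordered method.

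The center-manifold case reduces to the same template after the standard spectral splitting $\cU=\cU_c\oplus\cU_h$ along the critical spectrum of $F'(u_0)$: the finite-dimensional central part $\cU_c$ plays the role of the $\R^k$ factor, while the hyperbolic complement $\cU_h$ inherits the well-posedness framework of Theorem \RSref{TheBCstuffRS}, and the two paragraphs above apply verbatim on $\cU_h\times\cU_c$. The parabolic extension hinted at in the statement is handled in the same way after a standard rewriting as an elliptic problem for the stationary reference point.

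The main obstacle will be a compatibility issue with the H\"older setting of Section \RSref{SecExa}: the functionals $\la\cdot,\phi_i\ra$ must act boundedly on $C^{2,\gamma}(\overline\Omega)$ and the $\psi_i$ must lie inside $C^\gamma(\overline\Omega)\times C^\gamma(\partial\Omega)$. Both are consequences of Schauder interior and boundary regularity applied to $F'(u_0)\phi=0$ and its adjoint, but recording these statements uniformly near $u_0$, and checking that $\tilde F'(u,\alpha)$ remains uniformly invertible for $(u,\alpha)$ in a H\"older-neighborhood of $(u_0,0)$, is the technical heart of the argument, for which the bifurcation literature cited in the statement supplies all the needed tools.
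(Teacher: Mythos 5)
Your proposal follows exactly the route that the theorem statement itself prescribes --- bordering $F=(G,B)$ by $\cN(G'(u_0))$ and a complement of $\cR(G'(u_0))$, invoking the bordering lemma to restore bounded invertibility of the linearization, and then feeding the augmented problem through Theorems \RSref{Theold3}, \RSref{TheUnifStab} and \RSref{Theerrbnd} --- which is precisely the argument the paper delegates entirely to the cited references (\cite{Boehmer18} and the earlier B\"ohmer works) without writing it out. Your version is therefore consistent with, and considerably more explicit than, the paper's own treatment; the one point you rightly flag as the technical heart (boundedness of the bordering functionals and membership of the $\psi_i$ in the H\"older range space, plus uniform invertibility of $\tilde F'$ near $(u_0,0)$) is exactly what the paper leaves to \cite{Boehmer18} as well.
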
 
This builds on results
of \RScite{Amann90} for parabolic problems,
but we have to refer the reader to
\RScite{Boehmer18} for details.
An extension to nonlinear boundary  operators is obvious.
  Just replace in \eref{NRbc} the $b_\nu$ and $b_i$ by $b_\nu(u)$ and $b_i(u)$
  and thee previous $F=(G,B_{NR})$ by $F(u)=(G(u),B_{NR}(u)u)$.
  Then all results prevail. 
%****************************************************************
\section{Summary and Outlook}\RSlabel{SecSumOut}
Our nonlinear discretization theory splits the necessary work
for error bounds and convergence rates into two parts:
\begin{enumerate}
 \item into PDE theory for establishing explicit results
   about well-posedness,
 \item into Approximation theory for proving rates
   of convergence in trial spaces.
\end{enumerate} 
Once these ingredients are provided, there are
numerical methods that have  a certain stability, though at the expense of
oversampling and nonlinear optimization. They guarantee that the numerical
solution of the PDE solution converges at the rate of the trial space
approximation reduced by a factor
  that arises from the Hölder theory
  of the PDE. Furthermore, they apply to all possible trial spaces
and usually do not require any background integration.
\biglf
However, there are
several shortcomings that need additional work:
\begin{enumerate}
\item The amount of oversampling is strongly problem-dependent
  and cannot easily be addressed \RScite{schaback:2015-4}.
\item Playing the algorithmic part back to optimization
  \RScite{BoScha12, boehmer-schaback:2017-1} ignores numerical efficiency.
\item There is no mention of sparsity considerations.
\item The convergence rates will in many cases not be competitive
  with sophisticated finite element techniques. They require
  high smoothness of the true solution to be effective. But,
  on the positive side, the optimization techniques are
  very easy to implement compared to finite element methods
  \RScite{BoScha12, boehmer-schaback:2017-1}, making them attractive
  for users who want quick answers without too much hassle.
\item There may be a different workaround for the stability complications
  arising from the Hölder theory of the PDE. If the well-posedness
norm in $\cU$ is taken weaker than the full norm on $\cU$, the approach gets
closer to \RScite{schaback:2015-4}
and yields uniform stability in that weaker norm,
but then the linearization arguments of
\RScite{BoScha12} have to be replaced.
\end{enumerate} 
%%%%%%%%%%%%%%%%%%%%%%%%%%%%%%%%%%%%%%%%%%%%%%%%
%%%%%%%%%%%%%%%%%%%%%%%%%%%%%%%%%%%%%%%%%%%%%%%
%%%%%%%%%%%%%%%%%%%%%%%%%%%%%%%%%%%%%%%%%%%%%%%%
%%%%%%%%%%%%%%%%%%%%%%%%%%%%%%%%%%%%%%%%%%%%%%%
%%%%%%%%%%%%%%%%%%%%%%%%%%%%%%%%%%%%%%%%%%%%%%%
\bibliographystyle{plain}
%\bibliography{jointbib,bigbib}

\begin{thebibliography}{10}

\bibitem{AgDoNi59}
S.~Agmon, A.~Douglis, and L.~Nirenberg.
\newblock Estimates near the boundary for solutions of elliptic partial
  differential equations {I}.
\newblock {\em Comm. Pure Apppl. Math.}, 12:623--727, 1959.

\bibitem{Amann90}
H.~{Amann}.
\newblock {Dynamic theory of quasilinear parabolic equations. II:
  Reaction-diffusion systems.}
\newblock {\em {Differ. Integral Equ.}}, 3(1):13--75, 1990.

\bibitem{arcangeli-et-al:2012}
R\'emi Arcang\'eli, Maria L\'opez~de Silanes, and Juan Torrens.
\newblock Extension of sampling inequalities to {S}obolev semi-norms of
  fractional order and derivative data.
\newblock {\em Numer.\ Math.}, 121:587--608, 2012.

\bibitem{Boehmer93}
K~B{\"{o}}hmer.
\newblock On a numerical {L}iapunov-{S}chmidt method for operator equations.
\newblock {\em Computing}, 51:237--269, 1993.

\bibitem{Boehmer99b}
K.~B{\"{o}}hmer.
\newblock On numerical bifurcation studies for general operator equations.
\newblock In J.~Sprekels B.~Fiedler, K.~Gr{\"{o}}ger, editor, {\em
  International Conference on Differential Equations, Proceedings of the
  conference, Equadiff '99, Berlin, Germany, August 1-7,1999. Vol. 2},
  volume~2, pages 877--883, Singepore, 2000. World Scientific.

\bibitem{Boehmer99a}
K.~B{\"{o}}hmer.
\newblock On hybrid methods for bifurcation studies for general operator
  equations.
\newblock In B.~Fiedler, editor, {\em Ergodic theory, Analysis, and Efficient
  Simulation of Dynamical Systems}, pages 73--107, Berlin, Heidelberg, New
  York, 2001. Springer.

\bibitem{Boehmer05}
K.~B{\"{o}}hmer.
\newblock {\em Numerical Methods for Nonlinear Elliptic Differential Equations,
  a Synopsis}.
\newblock {Oxford University Press, Oxford, 772 pp.}, 2010.

\bibitem{Boehmer17}
K.~B{\"{o}}hmer.
\newblock Numerical center manifold methods.
\newblock In P.~Gurevich e.al., editor, {\em Patterns of Dynamics}, Proceedings
  in Mathematics and Statistics Vol 205, pages 242--269. Springer , Berlin,
  Heidelberg, New York, 2017.

\bibitem{Boehmer18}
K.~B{\"{o}}hmer.
\newblock Elliptic differential and their complementing boundary operators.
\newblock ArXiv , to appear, 2018.

\bibitem{Boehmer06}
K.~B{\"{o}}hmer.
\newblock {\em Numerical Methods for Bifurcation and Center Manifolds in
  Nonlinear Elliptic and Parabolic Differential Equations}.
\newblock {Oxford University Press, Oxford, ca 650 pp.}, planned for 2019.

\bibitem{BoScha12}
K.~B{\"{o}}hmer and R.~Schaback.
\newblock A nonlinear discretization theory.
\newblock {\em J. Comp. Appl.Math.}, 254:204--219, 2013.

\bibitem{boehmer-schaback:2017-1}
K.~B\"ohmer and R.~Schaback.
\newblock A meshfree method for solving the {M}onge-{A}mp\'ere equation.
\newblock Fachbereich {M}athematik und {I}nformatik,
  {P}hilipps--{U}niversit{\"{a}}t {M}arburg, in revision, 2017.

\bibitem{davydov-saaed:2013-1}
O.~Davydov and A.~Saeed.
\newblock Numerical solution of fully nonlinear elliptic equations by
  {B}{\"o}hmer's method.
\newblock {\em J. Comp.Appl.Math.}, 254:43--54, 2013.

\bibitem{fasshauer:2002-1}
G.E. Fasshauer.
\newblock Newton iteration with multiquadrics for the solution of nonlinear
  {PDE}s.
\newblock {\em Computers and Mathematics with Applications}, 43:423--438, 2002.

\bibitem{GiTr01}
D.~Gilbarg and N.S. Trudinger.
\newblock {\em Elliptic Partial Differential Equations of Second Order}.
\newblock Springer Verlag, Berlin, 2001.
\newblock Reprint of the 1998 Edition.

\bibitem{HoRoSu05}
P.~Houston, J.~Robson, and E.~S{\"{u}}li.
\newblock Discontinuous {G}alerkin finite element approximation of quasilinear
  elliptic boundary value problems {I}: The scalar case.
\newblock {\em IMA J. Numer. Anal.}, 25:726--749, 2005.

\bibitem{HoSu05}
P~Houston and E~S{\"{u}}li.
\newblock A note on the design of $hp$-adaptive finite element methods for
  elliptic partial differential equations.
\newblock In L.~Demkowicz J.~Whiteman, editor, {\em Computer Methods in Applied
  Mechanics and Engineering}, volume 194 of {\em Selected papers from the 11th
  Conference on The Mathematics of Finite Elements and Applications}, pages
  229--243, 2005.

\bibitem{HoSuWi08}
P.~Houston, E.~S{\"{u}}li, and T.P. Wihler.
\newblock A posteriori error analysis of $hp$-version discontinuous {G}alerkin
  finite element methods for second-order quasilinear elliptic problems.
\newblock {\em IMA Journal of Numerical Analysis}, 28:245--273, 2008.

\bibitem{jankowska-et-al:2018-1}
M.A. Jankowska, A.~Karageorghis, and C.S. Chen.
\newblock Improved {K}ansa {RBF} method for the solution of nonlinear boundary
  value problems.
\newblock {\em Engineering Analysis with Boundary Elements}, 87:173--183, 2018.

\bibitem{li-liu:2017-1}
Q.~Li and Z.Y. Liu.
\newblock Solving the 2-{D} elliptic {M}onge-{A}mp{\`e}re equation by a
  {K}ansa's method.
\newblock {\em Acta Mathematicae Applicatae Sinica, English Series},
  33:269--276, 2017.

\bibitem{LiMa72}
J.L. Lions and E.~Magenes.
\newblock {\em Non-Homogeneous Boundary Value Problems and Applications},
  volume~I.
\newblock Springer, Berlin Heidelberg New York, 1972.

\bibitem{markov:1916-1}
V.A. Markov.
\newblock \"uber {P}olynome die in einem gegebenen {I}ntervalle m\"oglichst
  wenig von {N}ull abweichen.
\newblock {\em Math. Annalen}, 77:213--258, 1916.

\bibitem{RiWh00}
B.~Rivi\`ere and M.F. Wheeler.
\newblock A discontinuous {G}alerkin method applied to nonlinear parabolic
  equations.
\newblock In B.~Cockburn, G.~E. Karniadakis, and C.-W. Schu, editors, {\em
  Discontinuous {G}alerkin methods. Theory, computation and applications.},
  volume~11 of {\em Lect. Notes Comput. Sci. Eng.}, pages 231--244. Berlin,
  Springer, 2000.

\bibitem{schaback:2015-4}
R.~Schaback.
\newblock All well--posed problems have uniformly stable and convergent
  discretizations.
\newblock {\em Numerische Mathematik}, 132:597--630, 2015.

\bibitem{schaback-wendland:2002-1}
R.~Schaback and H.~Wendland.
\newblock Inverse and saturation theorems for radial basis function
  interpolation.
\newblock {\em Math. Comp.}, 71(238):669--681 (electronic), 2002.

\bibitem{SchuZe79}
R.~Schumann and E.~Zeidler.
\newblock The finite difference method for quasilinear elliptic equations of
  order $2m$.
\newblock {\em Numer. Funct. Anal. Optimiz.}, 1:161--194, 1979.

\bibitem{Skrypnik86}
I.~Skrypnik.
\newblock {\em Nonlinear Elliptic Boundary Value Problems}.
\newblock Teubner Verlag, Leipzig, 1986.

\bibitem{tsai:2012-1}
C.C. Tsai.
\newblock Homotopy method of fundamental solutions for solving certain
  nonlinear partial differential equations.
\newblock {\em Engineering Analysis with Boundary Elements}, 36:1226--1234,
  2012.

\bibitem{Wloka82}
J.~Wloka.
\newblock {\em Partielle {D}ifferentialgleichungen}.
\newblock Teubner, Stuttgart, 1982.

\bibitem{Zeidler86}
E.~Zeidler.
\newblock {\em Nonlinear functional analysis and its applications I,
  fixed-point theorems}.
\newblock Springer Verlag, New York, Berlin, Heidelberg, London, Paris, Tokyo,
  1986.
\newblock 2. Edition 1992.

\bibitem{Zeidler90a}
E.~Zeidler.
\newblock {\em Nonlinear functional analysis and its applications {II/B},
  nonlinear monotone operators}.
\newblock Springer Verlag, New York, Berlin, Heidelberg, London, Paris, Tokyo,
  1990.

\bibitem{zhang-et-al:2008-1}
Y.~Zhang, G.Y. Yang, K.R. Shao, and L.D. Lavers.
\newblock A {RBF} based iterative method for nonlinear electromagnetic
  problems.
\newblock In {\em 2008 International Conference on Electrical Machines and
  Systems}, pages 405--408, 2008.

\end{thebibliography}

\end{document}